\documentclass[reqno, 12pt]{amsart}
\usepackage{dsfont}
\usepackage{amssymb}
\usepackage{mathrsfs}
\usepackage{enumitem}
\usepackage{comment}
\usepackage[backend=biber, style=alphabetic, url=false]{biblatex}
\usepackage[pdfstartview=FitH, pdfborder={0 0 0}, colorlinks=true, citecolor=blue, linkcolor=blue]{hyperref}
\usepackage{aliascnt}

\theoremstyle{plain}
\newtheorem{theorem}{Theorem}[section]

\newaliascnt{conjecture}{theorem}

\aliascntresetthe{conjecture}

\newaliascnt{corollary}{theorem}

\aliascntresetthe{corollary}

\newaliascnt{lemma}{theorem}
\newtheorem{lemma}[lemma]{Lemma}
\aliascntresetthe{lemma}

\newaliascnt{fact}{theorem}

\aliascntresetthe{fact}

\newaliascnt{claim}{theorem}

\aliascntresetthe{claim}

\newaliascnt{proposition}{theorem}

\aliascntresetthe{proposition}

\theoremstyle{definition}

\newaliascnt{definition}{theorem}

\aliascntresetthe{definition}

\newaliascnt{example}{theorem}

\aliascntresetthe{example}

\theoremstyle{remark}

\newaliascnt{remark}{theorem}
\newtheorem{remark}[remark]{Remark}
\aliascntresetthe{remark}

\numberwithin{equation}{section}

\begin{document}
\title[A conjugate radius bound]{A conjugate radius bound for open three-manifolds}
\author[J. Ge]{Jian Ge}
\address[Ge]{School of Mathematical Sciences, Laboratory of Mathematics and Complex Systems, Beijing Normal University, Beijing 100875, P. R. China.}
\email{jge@bnu.edu.cn}
\thanks{NSFC 12371049 and the Fundamental Research Funds for the Central Universities.}

\author[C. Li]{Chuanhuan Li}
\address[C. Li]{Shanghai Institute for Mathematics and Interdisciplinary Sciences (SIMIS), Shanghai 200433, China \newline
${\quad}$ Research Institute of Intelligent Complex Systems, Fudan University, Shanghai 200433, China}
\email{chli@simis.cn}
\thanks{China Postdoctoral Science Foundation (Grant
No. 2026M793350)}

\subjclass[2020]{53C21, 53C22, 53A10}
\keywords{Conjugate radius, scalar curvature, $\mu$-bubbles}
\date{October 6, 2026}
\begin{abstract}
	\setlength{\emergencystretch}{2em}
	We prove that every complete, connected, noncompact Riemannian three-manifold without boundary with scalar curvature at least $6$ has conjugate radius at most $2\pi/3$. This resolves Shen's conjecture on the strict conjugate-radius bound $\operatorname{conj}(M,g)<\pi$ in dimension three.
\end{abstract}
\maketitle

\section{Introduction}\label{sec:introduction}

Green \cite{Green1963} proved that a closed $n$-dimensional Riemannian manifold with average scalar curvature $s>0$ satisfies
\begin{equation*}
	\operatorname{conj}(M,g)\le\pi\sqrt{n(n-1)/s},
\end{equation*}
with equality precisely for metrics of constant positive sectional curvature. For complete noncompact manifolds with $\operatorname{Scal}_g\ge n(n-1)$, Shen \cite{Shen2001} proposed the strict inequality $\operatorname{conj}(M,g)<\pi$. He established the non-strict bound, in particular, under a Ricci lower bound and subexponential volume growth.

Zhu \cite{Zhu2022} proved the non-strict bound under the additional assumption of nonnegative Ricci curvature. Kwong \cite{Kwong2025} established Green's estimate and rigidity for complete finite-volume manifolds with integrable negative Ricci curvature on the unit tangent bundle, including the Ricci-lower-bound setting discussed in \cite{Gromov2025JAMR}. Ge and Zhang \cite{GeZhang2026} obtained $\operatorname{conj}(M,g)\le\pi$ when the bottom of the Laplacian spectrum is zero; they also proved an estimate for the average scalar curvature on finite-volume manifolds, with constant-curvature rigidity under integrability of the negative part of scalar curvature. In dimension three, Richard \cite{Richard2026} proved the unconditional estimate $\operatorname{inj}(M,g)\le2\pi/3$ for complete noncompact manifolds with $\operatorname{Scal}_g\ge6$. Since $\operatorname{inj}(M,g)\le\operatorname{conj}(M,g)$, that estimate does not imply the corresponding conjugate-radius bound.

For a complete Riemannian manifold $(M,g)$, let $\operatorname{conj}_g(x)$ denote the infimum of the first conjugate times along unit-speed geodesics issuing from $x$, with value $+\infty$ when no such conjugate point exists, and set
\begin{equation*}
	\operatorname{conj}(M,g)=\inf_{x\in M}\operatorname{conj}_g(x).
\end{equation*}
We prove the following estimate.

\begin{theorem}\label{thm:main}
	\setlength{\emergencystretch}{2em}
	Let $(M^3,g)$ be a complete, connected, noncompact smooth Riemannian manifold without boundary. If $\operatorname{Scal}_g\ge6$, then
	\begin{equation*}
		\operatorname{conj}(M,g)\le\frac{2\pi}{3}.
	\end{equation*}
\end{theorem}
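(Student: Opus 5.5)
Argue by contradiction: assume $\operatorname{conj}(M,g)>2\pi/3$, so there is $r_0>2\pi/3$ such that no geodesic has a conjugate point before time $r_0$. The plan has three steps: lift the metric locally by exponential maps, build a $\mu$-bubble in a lifted ball, and use the stability inequality for the bubble to force a radius bound that contradicts $r_0>2\pi/3$.

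\textbf{Step 1: local lifts.} Fix $x\in M$. Since there are no conjugate points before $r_0$, the map $\exp_x$ is a local diffeomorphism on the open ball $B_{r_0}(0)\subset T_xM$. Pulling back gives a smooth metric $\tilde g_x=\exp_x^*g$ on this ball, with $\operatorname{Scal}\ge6$. In $\tilde g_x$, radial lines are minimizing geodesics, so every point at radius $r$ lies at distance exactly $r$ from the origin. Hence $(B_{r_0}(0),\tilde g_x)$ is an incomplete manifold that nonetheless has a well-defined distance function from the origin, with distance spheres that are smooth topological $2$-spheres.

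\textbf{Step 2: a $\mu$-bubble.} The natural tool is Gromov's band-width estimate for $\operatorname{Scal}\ge6$ in dimension three. A band with $\operatorname{Scal}\ge n(n-1)$ separating two boundary components has width at most $2\pi/n=2\pi/3$, and the ball picture (removing a small ball near the center) gives the same constant, which matches the statement. A single lifted ball is simply connected and compact after closure, so it carries no obstruction on its own. Noncompactness of $M$ must therefore enter. Pick a ray $\gamma$ in $M$ and work in the lifted ball at a point $x=\gamma(t)$ far along the ray. Use the Busemann function of $\gamma$, pulled back, as a weight. On the lifted ball we have the radial distance $\rho$ and the pulled-back Busemann function $b$, which is $1$-Lipschitz. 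I would consider the warped $\mu$-bubble functional
\begin{equation*}
	\mathcal A(\Omega)=\mathcal H^2(\partial\Omega)-\int_\Omega h\,d\mathcal H^3,\qquad h=-2\cot\bigl(\tfrac{3}{2}\rho\bigr)\ \text{(suitably normalised)},
\end{equation*}
with $h\to\pm\infty$ at the inner and outer ends of the annulus $\{\epsilon<\rho<r_0\}$. This produces a stable surface $\Sigma$ separating a small sphere around the origin from the outer boundary.

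\textbf{Step 3: contradiction from stability.} Run the standard Schoen--Yau rearrangement of the stability inequality, using $\operatorname{Scal}\ge6$ together with $|\nabla h|+\tfrac{3}{4}h^2+6\ge$ something positive. This yields $\int_\Sigma K>0$ with a quantitative remainder, so every component of $\Sigma$ is a sphere and the bubble exists whenever the width exceeds $2\pi/3$. That alone is not yet a contradiction, since a sphere separating in a ball is allowed. The contradiction must instead come from inserting a noncompact direction: take $h$ to depend on the Busemann function, so that the band $\{b\in(t-r_0,t)\}$ lies inside the lifted region around $\gamma$. Then, following Richard's argument for $\operatorname{inj}$, show that the band-width inequality fails.

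\textbf{Main obstacle.} The serious difficulty is that the band in $M$ does not lift injectively: $\exp_x$ is only a local diffeomorphism, so the whole band of width $>2\pi/3$ around the ray is not contained in a single lifted ball. To handle this, I would first try to exploit that an incomplete metric with no conjugate points still admits $\mu$-bubble minimization inside the compactly contained domain $\{\rho<r_0-\delta\}$. I would choose the band transverse to the radial direction, so that both of its boundary faces are reached inside the ball. If that fails, the fallback is a covering or immersion argument that constructs a complete band-like manifold from the ray neighbourhood.
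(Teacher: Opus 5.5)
There is a genuine gap. Your proposal never reaches a contradiction. Step 3 ends with ``show that the band-width inequality fails'', and the obstacle you name (the band does not lift injectively) is left open, with only a vague fallback.

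More seriously, the band-width mechanism itself cannot work here. The $2\pi/3$ width bound for $\operatorname{Scal}\ge6$ comes from $\mu$-bubbles only when the topology of the band forces the bubble to have nonpositive Euler characteristic, as in over-torical bands. In your setting, your own Step 3 shows every bubble component is a sphere. Spherical bands satisfy no width bound at all: $S^2(1/\sqrt3)\times[-L,L]$ has $\operatorname{Scal}=6$ and arbitrary width. So even a perfectly lifted Busemann band would give no contradiction. In your scheme the conjugate-radius hypothesis is used only to pull back a metric, never to contradict anything. You are right that a lifted ball alone carries no obstruction; the round $S^3$, with $\operatorname{Scal}=6$ and $\operatorname{conj}=\pi$, confirms this.

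The missing idea is to lift the \emph{surface} rather than the band, and to detect the contradiction homologically.

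\textbf{Building the sphere.} Pass to the oriented universal cover, which is still complete, and build the $\mu$-bubble in $M$ itself. Work on $\{L\le\rho\le3L\}$ around a fixed point $p$, with $\mu_L=-\frac{4\pi}{3L}\tan(\pi s_L/L)$, so that $\frac34\mu_L^2-|\nabla\mu_L|\ge-\frac{4\pi^2}{3L^2}$. Stability then gives the \emph{intrinsic} inequality $-\Delta_S+K_S\ge c_L$ on each boundary component $S$, with $c_L\to3$, so $S$ is a sphere. Since $H_3(M;\mathbb R)=0$ for noncompact $M$, some component $\Sigma_L$ is nonzero in $H_2(M\setminus\{p\};\mathbb R)$ and lies at distance greater than $L-1$ from $p$.

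\textbf{Contracting the sphere.} Take $x\in\Sigma_L$ and $2\pi/3<a<\operatorname{conj}(M,g)$. Then $\exp_x$ is a local isometry on $B_a(0)$, and the spectral inequality pulls back to $P=\exp_x^{-1}(\Sigma_L)$. Apply the Schoen--Yau inradius estimate $d_\Omega(0,\partial\Omega)\le 2\pi/\sqrt{3c_L}$, which is purely two-dimensional and valid on incomplete surfaces, to the component $\Omega$ of $P\cap\overline B_r(0)$ through $0$. Together with Gauss's lemma, this forces $\Omega$ to have empty boundary. Then $\Omega$ covers $\Sigma_L$, hence is diffeomorphic to it. The homotopy $\exp_x(tF(y))$ contracts $\Sigma_L$ inside $B(x,a)$, which misses $p$ once $L$ is large, contradicting nontriviality of $[\Sigma_L]$.

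This is how the non-injectivity of $\exp_x$ is bypassed: nothing three-dimensional is lifted, only the bubble.
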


To our knowledge, \autoref{thm:main} gives the first general conjugate radius bound strictly below $\pi$ for complete noncompact three-manifolds under the scalar curvature lower bound alone. It therefore resolves Shen's conjecture in dimension three, with the uniform bound $2\pi/3$.

Rescaling gives $\operatorname{conj}(M,g)\le2\pi/(3\sqrt\kappa)$ whenever $\operatorname{Scal}_g\ge6\kappa>0$. The product $S^2(1/\sqrt3)\times\mathbb R$ has scalar curvature $6$ and conjugate radius $\pi/\sqrt3$. The optimal universal constant is not determined here.

The proof combines $\mu$-bubbles with the Schoen--Yau spectral inradius estimate \cite{SchoenYau1983}. Stability produces embedded spheres far from a fixed point, with a positive function satisfying $-\Delta u+Ku\ge(3-o(1))u$. The inradius estimate then forces a component of the inverse image of such a sphere under a nonsingular exponential map to be compact. This component maps diffeomorphically onto the sphere, yielding a contraction that contradicts its nonzero homology class in the punctured manifold. The exponential map need not be injective.

${}$

{\bf Declaration of AI}. The authors provided the main ideas. AI tools were used solely for assistance with writing and language editing. The authors take full responsibility for the content.

\section{Proof}\label{sec:proof}

We use $\Delta=\operatorname{div}\nabla$, write $K$ for Gaussian curvature, and take $H=\operatorname{tr}A$ with respect to the outward unit normal. Homology groups have real coefficients and are ordinary singular homology groups.

To prove \autoref{thm:main}, we may pass to the universal Riemannian cover $\pi:(\widetilde M,\pi^*g)\to(M,g)$. The covering manifold is complete and noncompact and has the same scalar curvature lower bound. Geodesics and Jacobi fields lift under a Riemannian covering, so
\begin{equation*}
	\operatorname{conj}(\widetilde M,\pi^*g)=\operatorname{conj}(M,g).
\end{equation*}
Thus it suffices to prove the theorem for simply connected, and hence oriented, manifolds.

We use the following intrinsic form of the Schoen--Yau radius argument; see \cite[pp.~578--579]{SchoenYau1983}. Its spectral hypothesis is naturally supplied by surface stability, as explained in \autoref{rem:stability-origin}. We include a proof for completeness.

\begin{lemma}\label{lem:inradius}
	Let $(S,h)$ be a smooth Riemannian surface carrying a positive smooth function $u$ such that
	\begin{equation}\label{eq:spectral-inequality}
		-\Delta_hu+K_hu\ge cu,\qquad c>0.
	\end{equation}
	If $\Omega\subset S$ is a compact connected smooth surface domain with nonempty boundary, then
	\begin{equation*}
		d_\Omega(x,\partial\Omega)\le D(c):=\frac{2\pi}{\sqrt{3c}}
	\end{equation*}
	for every $x\in\operatorname{Int}\Omega$, where distance is computed using $h$ and curves in $\Omega$.
\end{lemma}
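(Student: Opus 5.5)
We argue by contradiction using the Schoen--Yau warped-product (or, equivalently, weighted-length) second variation argument. Suppose $x_0\in\operatorname{Int}\Omega$ has $d_\Omega(x_0,\partial\Omega)>D(c)$. We will construct a length-minimizing curve that is too long to be stable. Consider the conformally weighted length functional
\begin{equation*}
	L_u(\gamma)=\int_\gamma u^{1/2}\,ds
\end{equation*}
(more generally $\int_\gamma u^{\alpha}\,ds$, with the exponent fixed later to optimize the constant). Equivalently, we work with the metric $\tilde h=u\,h$ restricted to $\Omega$. The quantity $u$ is bounded above and below on the compact set $\Omega$, so $\tilde h$-distances are comparable to $h$-distances.

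The key steps are as follows.

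\emph{Step 1.} Choose two points of $\partial\Omega$, or better, take the region $\Omega'=\{x\in\Omega: d_\Omega(x,\partial\Omega)\ge \epsilon\}$ around $x_0$. Following Schoen--Yau, form a "band" by cutting $\Omega$ along a curve so that one obtains two boundary arcs $\partial_\pm$ at $h$-distance $>D(c)$ within $\Omega$. Then minimize $L_u$ among curves in $\Omega$ joining $\partial_-$ to $\partial_+$. The simplest version is to connect $x_0$ with $\partial\Omega$: minimize $L_u$ over curves from a small circle around $x_0$ to $\partial\Omega$, or among closed curves separating $x_0$ from $\partial\Omega$. We use the latter. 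Minimize $L_u$ over curves in $\Omega$ homologous to $\partial\Omega$ relative to a small disk around $x_0$. Existence and regularity follow from compactness and the fact that the weighted geodesic curvature of $\partial\Omega$ can be made to give a barrier, after perturbing $\Omega$ slightly (or working in $\Omega$ with mean-convex modification). Actually a cleaner route is the standard one: take the $h$-distance function $\rho$ from $\partial\Omega$ and analyze a minimizing geodesic in the metric $\tilde h$ from $x_0$ to $\partial\Omega$.

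\emph{Step 2.} Second variation. For a $\tilde h$-geodesic segment $\gamma$ minimizing from $x_0$ to $\partial\Omega$ (interior points are unconstrained), the second variation with normal variation $\phi N$, $\phi$ vanishing at endpoints, gives
\begin{equation*}
	\int_\gamma u^{1/2}\Big(\phi'^2-(K_h - \Delta_h\log u^{1/2}+\ldots)\phi^2\Big)\ge0.
\end{equation*}
Substituting $-\Delta u\ge (c-K)u$ and splitting $\Delta_h u$ into $\nabla^2u(N,N)+\partial_s^2u+\kappa\partial_N u$, the tangential part is integrated by parts against $\phi^2u^{1/2}$ and absorbed. The result is a one-dimensional inequality of the form
\begin{equation*}
	\int_0^\ell \big(a\,\psi'^2 - b\,c\,\psi^2\big)\,ds\ge0
\end{equation*}
for all $\psi$ vanishing at the ends, with constants $a,b$ depending on the exponent. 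The point is to choose the exponent so that $b/a=3/4$, giving $\ell\le \pi/\sqrt{3c/4}=2\pi/\sqrt{3c}$ by testing $\psi=\sin(\pi s/\ell)$. Since $\ell$ is the $h$-length and $\gamma\subset\Omega$ joins $x_0$ to $\partial\Omega$, we get $d_\Omega(x_0,\partial\Omega)\le\ell\le D(c)$. (One needs $\gamma$ parametrized by $h$-arclength; the weight $u^{\alpha}$ appears in both terms and is handled by the substitution $\psi=u^{\beta}\phi$.)

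\emph{Main obstacle.} The delicate part is Step 2's algebra: choosing $\alpha$ and the substitution $\psi=u^\beta\phi$ so that the cross terms $u^{-1}u'\phi\phi'$ and $u'^2/u^2$ combine, via Cauchy--Schwarz, into a nonnegative remainder, yielding exactly the constant $3/4$. I would first try $\alpha=1$ ($\tilde h$ with $ds_{\tilde h}=u\,ds$, the Schoen--Yau choice), where the second variation reads $\int u\phi'^2 -\int(\ldots)$. Then I would set $\phi=u^{-1/2}\psi$ and complete squares, checking that the coefficient $3/4$ arises from optimizing $t\mapsto$ the Young's inequality constant. A secondary issue is ensuring the minimizer does not touch $\partial\Omega$ in its interior. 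Since it minimizes distance to $\partial\Omega$ itself, it hits $\partial\Omega$ only at its endpoint, so no barrier is needed.
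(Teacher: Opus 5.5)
The route you eventually settle on matches the paper's proof: weight $u$ (so the metric $\bar h=u^2h$, not $uh$), a shortest curve from $x_0$ to $\partial\Omega$ stopped at its first boundary contact, normal variations $\varphi N$, the substitution $\varphi=u^{-1/2}\psi$, completing the square, and the test function $\sin(\pi s/\ell)$. But the proposal goes wrong at the step you call the main obstacle. The exponent is not a free parameter to be tuned until $b/a=3/4$. Your opening choice $\int_\gamma u^{1/2}\,ds$ (metric $uh$), for which your Step 2 display is written, fails. For $\tilde h=u^{2\alpha}h$ one has $K_{\tilde h}=u^{-2\alpha}(K_h-\alpha\Delta_h\log u)$, whereas the hypothesis controls only
\[
K_h-\Delta_h\log u\ \ge\ c+|\nabla\log u|_h^2 .
\]
For $\alpha\neq1$ the remainder $(1-\alpha)\Delta_h\log u$ contains the normal Hessian $\nabla^2\log u(N,N)$. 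That term has no sign, is not controlled by the hypothesis, and cannot be integrated by parts along $\gamma$, so integrating the tangential part by parts cannot absorb it. Thus $\alpha=1$ is forced, and $3/4$ is an output of the completing-square step rather than a target.

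With $\alpha=1$ everything is pointwise, and no splitting of the Laplacian is needed. Put $v=\log u$, $v'=\langle\nabla v,T\rangle$, $v_N=\langle\nabla v,N\rangle$. Since $u^{-1}N$ is a parallel $\bar h$-unit normal and $d\bar s=u\,ds$, the derivative term in the index form is $(\varphi'+v'\varphi)^2$, not $\varphi'^2$ as in your display. Hence
\[
0\le\int_0^\ell u\bigl((\varphi'+v'\varphi)^2-(K_h-\Delta_hv)\varphi^2\bigr)\,ds
\le\int_0^\ell u\bigl(\varphi'^2+2v'\varphi\varphi'-c\varphi^2-v_N^2\varphi^2\bigr)\,ds,
\]
because the term $|\nabla v|^2=v'^2+v_N^2$ supplied by the hypothesis cancels the $v'^2\varphi^2$ coming from the square. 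Substituting $\varphi=u^{-1/2}\psi$ turns the integrand into $\psi'^2+v'\psi\psi'-\tfrac34v'^2\psi^2-c\psi^2-v_N^2\psi^2$. Maximizing pointwise over $v'\psi$ (at $v'\psi=\tfrac23\psi'$) bounds this by $\tfrac43\psi'^2-c\psi^2$, and the sine test then gives $\ell\le2\pi/\sqrt{3c}$.

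The alternatives in your Step 1 (bands, closed curves homologous to $\partial\Omega$, barriers) are unnecessary. Your observation that the minimizer meets $\partial\Omega$ only at its endpoint is exactly what the paper uses.
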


\begin{proof}
	Set $v=\log u$ and $\bar h=u^2h$. From \eqref{eq:spectral-inequality},
	\begin{equation*}
		K_h-\Delta_hv\ge c+|\nabla v|_h^2,
		\qquad K_{\bar h}=u^{-2}(K_h-\Delta_hv).
	\end{equation*}
	Compactness gives a shortest $\bar h$-curve in $\Omega$ from $x$ to $\partial\Omega$. Stop at its first boundary contact and parametrize it by $h$-arclength $s\in[0,\ell]$. Its interior is a $\bar h$-geodesic. Let $T,N$ be its $h$-unit tangent and normal, and write $v'=\langle\nabla v,T\rangle$ and $v_N=\langle\nabla v,N\rangle$. Since $u^{-1}N$ is a parallel $\bar h$-unit normal and $d\bar s=u\,ds$, its index form satisfies
	\begin{align*}
		0\le I_{\bar h}(\varphi N,\varphi N)
		&=\int_0^\ell u\bigl((\varphi'+v'\varphi)^2-(K_h-\Delta_hv)\varphi^2\bigr)\,ds\\
		&\le\int_0^\ell u\bigl((\varphi')^2+2v'\varphi\varphi'-c\varphi^2-v_N^2\varphi^2\bigr)\,ds
	\end{align*}
	for $\varphi\in C_c^\infty(0,\ell)$. Substituting $\varphi=u^{-1/2}\psi$ and completing the square gives
	\begin{align*}
		0&\le\int_0^\ell\left(\frac43(\psi')^2-c\psi^2
		-\frac34\left(v'\psi-\frac23\psi'\right)^2-v_N^2\psi^2\right)\,ds\\
		&\le\int_0^\ell\left(\frac43(\psi')^2-c\psi^2\right)\,ds.
	\end{align*}
	By density in $H_0^1(0,\ell)$, take $\psi(s)=\sin(\pi s/\ell)$. Then $\ell\le2\pi/\sqrt{3c}$, and $d_\Omega(x,\partial\Omega)\le\ell$.
\end{proof}

\begin{remark}\label{rem:stability-origin}
	Stable minimal surfaces provide a geometric source of the spectral positivity in \autoref{lem:inradius}, as in Gromov--Lawson \cite[Section~10, equation~(10.17)]{GromovLawson1983}. Indeed, if $\Sigma$ is a two-sided stable minimal surface in a Riemannian three-manifold with $\operatorname{Scal}_g\ge\sigma>0$, stability and the Gauss equation give
	\begin{align*}
		\int_\Sigma\bigl(|\nabla\varphi|^2+K_\Sigma\varphi^2\bigr)\,dA
		&\ge\frac12\int_\Sigma\bigl(\operatorname{Scal}_g+|A|^2\bigr)\varphi^2\,dA\\
		&\ge\frac{\sigma}{2}\int_\Sigma\varphi^2\,dA
	\end{align*}
	for every $\varphi\in C_c^\infty(\operatorname{Int}\Sigma)$. Thus $-\Delta_\Sigma+K_\Sigma\ge\sigma/2$ in the quadratic-form sense. If $\Sigma$ is closed and connected, its positive first eigenfunction satisfies \eqref{eq:spectral-inequality} with $c=\sigma/2$.

	The Schoen--Yau radius argument converts this spectral positivity into an intrinsic bound on the distance to the boundary. For a stable $\mu$-bubble with $H=\mu$ in the same ambient curvature range, the corresponding stability calculation gives $-\Delta_\Sigma+K_\Sigma\ge\sigma/2-\varepsilon$ in the quadratic-form sense whenever $\frac34\mu^2-|\nabla\mu|_g\ge-\varepsilon$. In \autoref{lem:remote-spheres}, $\sigma=6$ and $\varepsilon\to0$, so positive first eigenfunctions on the resulting closed components supply the required spectral inequality. Pulling back the metric and positive function by a nonsingular exponential map preserves this inequality, so \autoref{lem:inradius} also applies to compact domains in the inverse image of the surface.
\end{remark}

\begin{lemma}\label{lem:contraction}
	Let $(M^3,g)$ be complete, and let $\Sigma\subset M$ be a smooth closed embedded two-sphere carrying a positive smooth function satisfying $-\Delta_\Sigma u+K_\Sigma u\ge cu$ with $c>0$. If $x\in\Sigma$ and
	\begin{equation*}
		D(c)<a<\operatorname{conj}_g(x),
	\end{equation*}
	then the inclusion of $\Sigma$ is null-homotopic in $B_g(x,a)$. In particular, $[\Sigma]=0$ in $H_2(M\setminus\{p\};\mathbb R)$ whenever $d_g(x,p)>a$.
\end{lemma}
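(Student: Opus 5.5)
The plan is to lift $\Sigma$ through the exponential map at $x$. Set $B=\{v\in T_xM:|v|<a\}$ and $E=\exp_x|_B$. Since $a<\operatorname{conj}_g(x)$, the map $E$ has no critical points on $B$, so it is a local diffeomorphism and $\hat g=E^*g$ is a smooth Riemannian metric on $B$. Radial segments from $0$ are $\hat g$-geodesics of length $|v|$, so $\hat d(0,v)\le|v|$; conversely, any $\hat g$-curve from $0$ to $v$ has length at least $|v|$, by the Gauss lemma. Because $E$ is a local diffeomorphism, the preimage $\hat S=E^{-1}(\Sigma)\subset B$ is a smooth embedded surface, possibly noncompact and with infinitely many components, and $E|_{\hat S}$ is a local isometry onto $\Sigma$ for the induced metrics. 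Pulling back $u$ gives a positive function $\hat u=u\circ E$ satisfying $-\Delta\hat u+K\hat u\ge c\hat u$ on $\hat S$, as in \autoref{rem:stability-origin}. Let $S_0$ be the component of $\hat S$ containing $0$; note $0\in\hat S$ because $x\in\Sigma$.

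The key claim is that $S_0$ is compact, i.e.\ closed in $T_xM$ and disjoint from $\partial B$. Suppose not. Then $S_0$ contains points $v$ with $|v|$ arbitrarily close to $a$. Given such a point $y\in S_0$ with $|y|>D(c)$, choose a path in $S_0$ from $0$ to $y$, and let $\Omega$ be a compact connected smooth subdomain of $S_0$ containing a neighbourhood of that path. I intend to build $\Omega$ so that it contains the intrinsic ball of radius $r$ about $0$, with $D(c)<r$, whenever that ball reaches out of every compact region; this is where the details need care. The intended contradiction runs as follows. If the intrinsic distance in $S_0$ from $0$ to the "edge" exceeded $D(c)$, then \autoref{lem:inradius} would be violated. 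Any point of $S_0$ at intrinsic distance $\le D(c)$ from $0$ has ambient distance, hence $|v|$, at most $D(c)<a$. So the intrinsic ball $\bar B^{S_0}(0,r)$ with $D(c)<r<a$ stays inside $\{|v|\le r\}$, a compact subset of $B$. Using the local-isometry structure, this closed intrinsic ball is then compact; it is complete because $\Sigma$ is compact and geodesics lift. If $\bar B^{S_0}(0,r)\ne S_0$, then a smoothing of it is a compact domain $\Omega$ with boundary containing $0$ at distance about $r>D(c)$ from $\partial\Omega$, contradicting \autoref{lem:inradius}. Hence $S_0=\bar B^{S_0}(0,r)$ is compact. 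This step, making the intrinsic-ball compactness rigorous via path lifting on the complete surface $\Sigma$ inside the incomplete $(B,\hat g)$, is the main obstacle. I expect to handle it by noting that $E|_{S_0}\colon S_0\to\Sigma$ has the path-lifting property for paths whose lifts remain in $\{|v|\le r\}$.

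Once $S_0$ is compact, $E|_{S_0}\colon S_0\to\Sigma$ is a local isometry between compact connected surfaces, hence a covering map. Since $\Sigma$ is a simply connected sphere, it is a diffeomorphism. Define $\sigma=(E|_{S_0})^{-1}\colon\Sigma\to S_0\subset B$ and contract it radially via $H_t(y)=E(t\,\sigma(y))$ for $t\in[0,1]$. Then $H_1$ is the inclusion of $\Sigma$, $H_0$ is constant at $x$, and every point of the homotopy lies in $E(B)=B_g(x,a)$. So the inclusion is null-homotopic in $B_g(x,a)$.

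For the final assertion, take $d_g(x,p)>a$. Then $p\notin B_g(x,a)$, so the contraction takes place inside $M\setminus\{p\}$, and therefore $[\Sigma]=0$ in $H_2(M\setminus\{p\};\mathbb R)$.
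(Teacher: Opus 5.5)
Your proposal is correct and follows the paper's proof: you lift $\Sigma$ by $\exp_x$, use the Gauss lemma bound $d_{S_0}(0,v)\ge|v|$ together with \autoref{lem:inradius} to show that the component through $0$ is compact, and finish with the covering argument and the radial contraction $\exp_x(t\,\sigma(y))$. The step you flag as the main obstacle is immediate, since the intrinsic ball is a closed subset of the compact set $\hat S\cap\{|v|\le r\}$ (because $\hat S$ is closed in $B$), and the paper avoids your smoothing step entirely by taking $\Omega$ to be the component through $0$ of $\hat S\cap\{|v|\le r\}$, where $r\in(D(c),a)$ is a regular value of $|v|$ on $\hat S\setminus\{0\}$.
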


\begin{proof}
	The map $\exp_x:B_a(0)\subset T_xM\to M$ has nonsingular differential, so $\widehat g=(\exp_x)^*g$ is a smooth Riemannian metric and $\exp_x$ is a local isometry. The relatively closed embedded surface $P=\exp_x^{-1}(\Sigma)$ contains $0$, and $\widehat u=u\circ\exp_x|_P$ satisfies the same spectral inequality on $P$.

	Choose a regular value $r\in(D(c),a)$ of the radial norm $r_x(v)=|v|_{g_x}$ restricted to $P\setminus\{0\}$. Let $\Omega$ be the component of $P\cap\overline B_r(0)$ containing $0$. It is a compact smooth surface, possibly with boundary. If the boundary is nonempty, it lies in $\{r_x=r\}$. Gauss's lemma gives $|dr_x|_{\widehat g}=1$ away from $0$, so every curve in $\Omega$ from $0$ to its boundary has length at least $r$. By \autoref{lem:inradius},
	\begin{equation*}
		r\le d_\Omega(0,\partial\Omega)\le D(c)<r,
	\end{equation*}
	a contradiction. Thus $\partial\Omega=\varnothing$, and regularity of $r$ implies $\Omega\subset B_r(0)$.

	The local diffeomorphism $\exp_x|_\Omega:\Omega\to\Sigma$ has compact domain and open and closed image, hence is a finite covering onto $\Sigma$. Since $\Omega$ is connected and $\Sigma$ is simply connected, it is a diffeomorphism. Let $F:\Sigma\to\Omega$ be its inverse. The homotopy
	\begin{equation*}
		H(t,y)=\exp_x(tF(y)),\qquad (t,y)\in[0,1]\times\Sigma,
	\end{equation*}
	joins the constant map at $x$ to the inclusion and satisfies
	\begin{equation*}
		d_g(x,H(t,y))\le t|F(y)|_{g_x}<a.
	\end{equation*}
	The homology conclusion follows whenever this ball avoids $p$.
\end{proof}

\begin{lemma}\label{lem:remote-spheres}
	Let $(M^3,g)$ be complete, connected, oriented, noncompact, and without boundary, with $\operatorname{Scal}_g\ge6$. For each $p\in M$, there are arbitrarily large parameters $L$ and smooth closed embedded two-spheres $\Sigma_L$ such that
	\begin{equation}\label{eq:remote-sphere-data}
		d_g(p,\Sigma_L)>L-1,\qquad
		[\Sigma_L]\ne0\quad\text{in }H_2(M\setminus\{p\};\mathbb R),
	\end{equation}
	and a positive smooth function $u_L$ on $\Sigma_L$ satisfying
	\begin{equation}\label{eq:remote-spectral-data}
		-\Delta_{\Sigma_L}u_L+K_{\Sigma_L}u_L\ge c_Lu_L,
		\qquad c_L=3-\frac{4\pi^2}{3L^2}.
	\end{equation}
\end{lemma}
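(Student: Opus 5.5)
We will construct $\Sigma_L$ as a component of the boundary of a stable $\mu$-bubble placed in an annular band far from $p$. Fix $p$ and let $\rho$ be a smoothing of $d_g(p,\cdot)$ with $|\nabla\rho|\le 1+\delta$ and $|\rho-d_g(p,\cdot)|<\delta$. Since $M$ is noncompact and complete, for every $L$ the level sets $\{\rho=t\}$ are nonempty for all large $t$.

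Choose $t_0>L$ large and set $\phi=(\rho-t_0)/L\cdot\pi$ on the band $V=\{t_0-L/2<\rho<t_0+L/2\}$, rescaled so that $\phi$ runs over $(-\pi/2,\pi/2)$. Take
\begin{equation*}
	\mu=-\frac{4\pi}{3L}\tan\phi .
\end{equation*}
Then $\mu\to\mp\infty$ at the two ends of $V$. The constant is chosen so that
\begin{equation*}
	\tfrac34\mu^2-|\nabla\mu|\ge-\tfrac{4\pi^2}{3L^2}(1+O(\delta)).
\end{equation*}
Indeed, $\frac34\cdot\frac{16\pi^2}{9L^2}\tan^2\phi-\frac{4\pi^2}{3L^2}\sec^2\phi=-\frac{4\pi^2}{3L^2}$. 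Absorbing $\delta$ by slightly enlarging $L$ gives the constant in \eqref{eq:remote-spectral-data} with $\varepsilon=4\pi^2/(3L^2)$.

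By the standard existence theory for $\mu$-bubbles (Gromov, Zhu, Chodosh--Li), with the barrier condition supplied by the blow-up of $\mu$ at $\partial V$, there is a minimizer $\Omega$ of
\begin{equation*}
	\mathcal A(\Omega)=\mathcal H^2(\partial^*\Omega)-\int(\chi_\Omega-\chi_{\Omega_0})\mu
\end{equation*}
among Caccioppoli sets agreeing with $\Omega_0=\{\rho<t_0\}$ near $\partial V$. Its reduced boundary $\Sigma=\partial\Omega\cap V$ is a smooth closed embedded surface in dimension three, it lies in $V$, it has $H=\mu$, and it is stable. \autoref{rem:stability-origin} then gives, on each component,
\begin{equation*}
	-\Delta+K\ge 3-\tfrac{4\pi^2}{3L^2}
\end{equation*}
in the quadratic-form sense. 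The positive first eigenfunction therefore satisfies \eqref{eq:remote-spectral-data}. Gauss--Bonnet, integrating the inequality against $u\equiv$ eigenfunction, gives $\int K>0$ for $L$ large, so every component is a sphere. Moreover $d_g(p,\Sigma)\ge t_0-L/2-\delta>L-1$.

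The main obstacle is the homological nontriviality in $H_2(M\setminus\{p\};\mathbb R)$. The set $\Omega$ contains $\{\rho<t_0-L/2\}\ni p$, and $\partial\Omega$ lies inside $V$. Hence $\partial\Omega=\Sigma$ separates $p$ from the nonempty region $\{\rho>t_0+L/2\}$ of the noncompact manifold. We need a single component $\Sigma_L$ with $[\Sigma_L]\neq0$. Take a ray $\gamma$ from $p$ to infinity, which exists by noncompactness and completeness. Its algebraic intersection number with $\Sigma=\partial\Omega$ is $1$, since the ray starts inside $\Omega$ and eventually leaves it for good. Therefore some component $\Sigma_L$ has nonzero intersection number with $\gamma$.

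Intersection number with a proper ray from $p$ to infinity defines a homomorphism $H_2(M\setminus\{p\};\mathbb R)\to\mathbb R$. This uses the orientation of $M$ and the fact that $\gamma\setminus\{p\}$ is closed in $M\setminus\{p\}$: it is a locally finite $1$-cycle, and pairing with it is well defined on homology. Hence $[\Sigma_L]\neq0$. The remaining care is in justifying this pairing, and in making the $\delta$-smoothing errors vanish, which we handle by choosing $\delta\ll L^{-3}$ and a slightly larger band.
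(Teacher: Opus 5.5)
\textbf{Comparison with the paper.} Your bubble construction matches the paper's: a $\tan$-weighted $\mu$-bubble in a distance band, the Chodosh--Li stability inequality with $|A|^2\ge H^2/2$, then Gauss--Bonnet. The only difference there is that the paper writes out the barrier argument you cite.

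The homological step takes a genuinely different route. The paper glues the minimizer to $\{\rho\le L\}$ to obtain a compact region $W_L\ni p$. Using $W_L\setminus B_\varepsilon(p)$ it gets $[\partial W_L]=[\partial B_\varepsilon(p)]$ in $H_2(M\setminus\{p\};\mathbb R)$. It then detects $[\partial B_\varepsilon(p)]\ne0$ from the exact sequence of $(M,M\setminus\{p\})$ and $H_3(M;\mathbb R)=0$. You instead exhibit an explicit dual class. The Thom form of a tubular neighborhood of the properly embedded ray $\gamma((0,\infty))\subset M\setminus\{p\}$ is a closed $2$-form, so intersection with $\gamma$ is a well-defined functional on $H_2(M\setminus\{p\};\mathbb R)$. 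It equals $\pm1$ on $\partial\Omega$, either after a transverse perturbation of $\gamma$ or directly by Stokes on $\Omega\setminus B_\varepsilon(p)$.

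Your version names a concrete cohomology class and picks out the good component as one meeting $\gamma$ with nonzero algebraic count; noncompactness enters through the existence of a proper ray. The paper's version needs no ray and no transversality. Both are valid.

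\textbf{A step that fails as written.} With $|\nabla\rho|\le1+\delta$ and $\phi=\pi(\rho-t_0)/L$ you only have $|\nabla\phi|\le(1+\delta)\pi/L$. Hence
\[
\tfrac34\mu^2-|\nabla\mu|\ge\frac{4\pi^2}{3L^2}\bigl(\tan^2\phi-(1+\delta)\sec^2\phi\bigr)=-\frac{4\pi^2}{3L^2}\bigl(1+\delta\sec^2\phi\bigr).
\]
This is unbounded near $\partial V$, not $1+O(\delta)$. The barrier only keeps the bubble out of collars whose width depends on $\operatorname{div}(\nabla\rho/|\nabla\rho|)$, that is, on the smoothing itself. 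So choosing $\delta\ll L^{-3}$ does not control $\delta\sec^2\phi$ on $\Sigma$.

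The fix is to normalize by the actual Lipschitz constant. Set $\phi=\pi(\rho-t_0)/((1+\delta)L)$ on the band $|\rho-t_0|<(1+\delta)L/2$. Then $|\nabla\phi|\le\pi/L$, and the bound $-4\pi^2/(3L^2)$ holds exactly with no smallness of $\delta$. This is what the paper does with $s_L=(\rho-2L)/2$ and $|\nabla\rho|\le2$.

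\textbf{Smaller omissions.} Choose the band endpoints as regular values of $\rho$ (Sard), so the barrier theory applies to a smooth band. Also note that each component of $\partial\Omega$ is two-sided in the oriented $M$, hence orientable. That is what excludes $\mathbb{RP}^2$ once Gauss--Bonnet gives positive Euler characteristic.
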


\begin{proof}
	Choose a smooth approximation to distance \cite[Proposition~2.1]{GreeneWu1979} satisfying
	\begin{equation*}
		|\rho-d_g(p,\cdot)|<1,\qquad |\nabla\rho|\le2.
	\end{equation*}
	Its closed sublevels are compact by completeness. Sard's theorem permits arbitrarily large $L$ for which $L,2L,3L$ are regular values. On the compact band $N_L=\{L\le\rho\le3L\}$, put
	\begin{equation*}
		s_L=\frac{\rho-2L}{2},\qquad
		\mu_L=-\frac{4\pi}{3L}\tan\left(\frac{\pi s_L}{L}\right).
	\end{equation*}
	The function $\mu_L$ tends to $+\infty$ at the inner boundary and to $-\infty$ at the outer boundary. Since $|\nabla s_L|\le1$, direct differentiation gives
	\begin{equation}\label{eq:potential-bound}
		\frac34\mu_L^2-|\nabla\mu_L|
		\ge-\frac{4\pi^2}{3L^2}.
	\end{equation}

	Set $E_0=N_L\cap\{\rho<2L\}$ and minimize
	\begin{equation*}
		F(E)=P(E;N_L^\circ)
		-\int_{N_L}(\chi_E-\chi_{E_0})\mu_L\,dV_g
	\end{equation*}
	among finite-perimeter sets in $\mathscr A_L=\{E:E\mathbin\triangle E_0\Subset N_L^\circ\}$. We detail the barrier argument in \cite{ChodoshLi2024}. Write $P(E)=P(E;N_L^\circ)$ and understand set conditions up to null sets.

	Near the compact regular boundary levels, the fields $\eta_{\rm in}=\nabla\rho/|\nabla\rho|$ and $\eta_{\rm out}=-\nabla\rho/|\nabla\rho|$ are smooth with bounded divergence. Since $\mu_L\to+\infty$ at the inner boundary and $\mu_L\to-\infty$ at the outer boundary, choose $0<\delta=\delta_L<L$, independent of $E$, so that the collars
	\begin{equation*}
		C_{\rm in}=\{L\le\rho<L+\delta\},\qquad
		C_{\rm out}=\{3L-\delta<\rho\le3L\}
	\end{equation*}
	satisfy $\mu_L-\operatorname{div}\eta_{\rm in}\ge1$ and $\mu_L+\operatorname{div}\eta_{\rm out}\le-1$ on $C_{\rm in}\cap N_L^\circ$ and $C_{\rm out}\cap N_L^\circ$, respectively.

	For either collar $C$, let $B=\overline C\cap\partial N_L$ and let $\eta$ be its field. If a finite-perimeter set $A$ fills a neighborhood of $B$, Gauss--Green gives
	\begin{align*}
		\int_C\operatorname{div}\eta\,dV_g&=P(C)-\operatorname{Area}(B),\\
		\int_{A\cap C}\operatorname{div}\eta\,dV_g&\le P(A\cap C)-\operatorname{Area}(B).
	\end{align*}
	Here $\eta$ points outward from $C$ at its interior interface and inward to $N_L$ along $B$. Subtraction and perimeter submodularity give
	\begin{equation*}
		P(A\cup C)-P(A)\le P(C)-P(A\cap C)
		\le\int_{C\setminus A}\operatorname{div}\eta\,dV_g.
	\end{equation*}
	Apply this to $A=E$ at the inner collar and $A=N_L\setminus E$ at the outer collar. Complementation preserves relative perimeter, so for every $E\in\mathscr A_L$,
	\begin{align*}
		F(E\cup C_{\rm in})-F(E)
		&\le\int_{C_{\rm in}\setminus E}(\operatorname{div}\eta_{\rm in}-\mu_L)\,dV_g\le0,\\
		F(E\setminus C_{\rm out})-F(E)
		&\le\int_{E\cap C_{\rm out}}(\operatorname{div}\eta_{\rm out}+\mu_L)\,dV_g\le0.
	\end{align*}
	These modifications have compact support in $N_L^\circ$, so the integrals are finite and $T(E)=(E\cup C_{\rm in})\setminus C_{\rm out}$ is admissible. Let $\mathscr A_{L,\delta}$ be the subclass with $C_{\rm in}\subset E$ and $E\cap C_{\rm out}=\varnothing$. Since $F(T(E))\le F(E)$,
	\begin{equation*}
		\inf_{\mathscr A_L}F=\inf_{\mathscr A_{L,\delta}}F.
	\end{equation*}

	The sign of $\mu_L$ gives
	\begin{equation*}
		F(E)=P(E)+\int_{E\mathbin\triangle E_0}|\mu_L|\,dV_g\ge P(E).
	\end{equation*}
	For $E\in\mathscr A_{L,\delta}$, the difference $E\mathbin\triangle E_0$ lies in the fixed compact set $\{L+\delta\le\rho\le3L-\delta\}$, where $\mu_L$ is bounded. BV compactness, $L^1$ continuity of the volume term, and lower semicontinuity of perimeter give a minimizer $E_*$. Equality of the infima makes $E_*$ a minimizer over $\mathscr A_L$, allowing all compactly supported interior variations. Three-dimensional interior regularity gives a smooth boundary in the compact core. No connectedness of $N_L$ is needed. Gluing $E_*$ to $\{\rho\le L\}$ gives a compact smooth region $W_L$ with
	\begin{equation*}
		\{\rho\le L\}\subset W_L\subset\{\rho<3L\},
		\qquad\partial W_L\subset\{L<\rho<3L\}.
	\end{equation*}

	On every component $S$ of $\partial W_L$, substitute $u=1$ and $h=\mu_{L}$ into the first variation formula and the exact second variation identity of Section 3.2 in \cite{ChodoshLi2024}, it follows that $H=\mu_L$ and
	\begin{equation*}
		\int_S\left(|\nabla_{S}\varphi|^2-\left(\frac{1}{2}{\rm Scal}_{g}-K_{S}+\frac{1}{2}(|A|^2+H^{2})+\nu(\mu_L)\right)\varphi^2\right)\,dA\ge0
	\end{equation*}
	for every smooth $\varphi$.  $|A|^2\ge H^2/2$ and \eqref{eq:potential-bound} yield
	\begin{align*}
		\quad\frac12\operatorname{Scal}_g-K_S+&\frac12(|A|^2+H^2)+\nu(\mu_L)\\
        &\ge3-K_S+\frac34\mu_L^2-|\nabla_{M}\mu_L|\ge c_L-K_S.
	\end{align*}
	Consequently,
	\begin{equation*}
		\int_S(|\nabla_{S}\varphi|^2+K_S\varphi^2)\,dA
		\ge c_L\int_S\varphi^2\,dA.
	\end{equation*}
	For $L>2\pi/3$, testing with $\varphi=1$ gives $2\pi\chi(S)\ge c_L\operatorname{Area}(S)>0$. The surface $S$ is orientable, being a boundary in an oriented manifold, and hence is a two-sphere. Its positive first eigenfunction for $-\Delta_S+K_S$ satisfies \eqref{eq:remote-spectral-data}.

	For large $L$, a small normal ball $B_\varepsilon(p)$ lies in $\operatorname{Int}W_L$. The oriented boundary of $W_L\setminus B_\varepsilon(p)$ gives
	\begin{equation*}
		[\partial W_L]=[\partial B_\varepsilon(p)]
		\quad\text{in }H_2(M\setminus\{p\};\mathbb R).
	\end{equation*}
	Since $M$ is connected and noncompact, $H_3(M;\mathbb R)=0$. Excision and the exact sequence of the pair give \cite[Chapter~3]{Hatcher2002}
	\begin{equation*}
		0\longrightarrow H_3(M,M\setminus\{p\};\mathbb R)
		\cong\mathbb R\xrightarrow{\ \partial\ }H_2(M\setminus\{p\};\mathbb R).
	\end{equation*}
	The local orientation generator maps to $[\partial B_\varepsilon(p)]$, which is therefore nonzero. At least one of the finitely many components of $\partial W_L$, denoted by $\Sigma_L$, has nonzero class. Every point on that boundary has distance greater than $L-1$ from $p$, proving \eqref{eq:remote-sphere-data}.
\end{proof}

\begin{proof}[Proof of \autoref{thm:main}]
	By the reduction above, we may assume that $M$ is simply connected and oriented. If $\operatorname{conj}(M,g)>2\pi/3$, choose
	\begin{equation*}
		\frac{2\pi}{3}<a<\operatorname{conj}(M,g)
	\end{equation*}
	and fix $p\in M$. Since $c_L\to3$, we have $D(c_L)\to2\pi/3$. Take $L$ in \autoref{lem:remote-spheres} so large that $D(c_L)<a<L-1$. For any $x\in\Sigma_L$,
	\begin{equation*}
		D(c_L)<a<\operatorname{conj}_g(x),\qquad d_g(x,p)>a.
	\end{equation*}
	By \autoref{lem:contraction}, $[\Sigma_L]=0$ in $H_2(M\setminus\{p\};\mathbb R)$, contradicting \eqref{eq:remote-sphere-data}.
\end{proof}

\begingroup
\setlength{\emergencystretch}{2em}
\printbibliography
\endgroup
\end{document}